\documentclass[12pt]{article}
\usepackage[margin=1in]{geometry}
\usepackage{amsmath,amssymb,amsthm,mathtools}
\usepackage{bm}
\usepackage{microtype}
\usepackage[hidelinks,pdfusetitle]{hyperref}
\usepackage{enumitem}

\newcommand{\hexago}{\lozenge}

\allowdisplaybreaks

\newtheorem{theorem}{Theorem}
\newtheorem{lemma}{Lemma}
\newtheorem{proposition}{Proposition}
\newtheorem{corollary}{Corollary}
\theoremstyle{definition}

\theoremstyle{remark}
\newtheorem{remark}{Remark}

\theoremstyle{definition}
\newtheorem{problem}{Problem}

\newcommand{\R}{\mathbb{R}}
\newcommand{\Q}{\mathbb{Q}}
\newcommand{\Z}{\mathbb{Z}}

\newcommand{\cP}{\mathcal{P}}
\newcommand{\cU}{\mathcal{U}}
\newcommand{\cA}{\mathcal{A}}

\newcommand{\relint}{\operatorname{relint}}
\newcommand{\sgn}{\operatorname{sgn}}

\usepackage{hyperref}
\hypersetup{
    colorlinks=true,
    linkcolor=blue,
    urlcolor=blue
}

\title{The integer point enumerator of one irrational translate of $\cP$ is a complete invariant\\
}
\author{Sinai Robins}
\date{August 18, 2026}

\begin{document}

\maketitle

\begin{abstract}
For a polytope $\cP\subset\R^d$ and a real dilation parameter $t>0$, we study its
real-dilate counting function $L_P(t):=|tP\cap\Z^d|$.  We characterize exactly the
translation vectors $\mathbf y:= (y_1, \dots, y_d) \in\R^d$ for which the single function
$t\mapsto L_{P+\mathbf y}(t)$, taken over all $t\in \Q_{>0}$,
uniquely determines every full-dimensional rational
polytope $P\subset\R^d$.  Namely, we prove that the necessary and sufficient condition is that
$1,y_1,\ldots,y_d$ must be linearly independent over $\Q$. In particular, in each
dimension we may use the explicit algebraic vector
\[
\mathbf y^*=
\bigl(2^{1/(d+1)},2^{2/(d+1)},\ldots,2^{d/(d+1)}\bigr).
\]
With this value for $\mathbf y$, letting $t$ traverse the positive rationals suffices.
The proof is based on the discontinuities created by the facets, inspired by the work
of Royer \cite{Royer}.
\end{abstract}

\tableofcontents

\section{Introduction, and statement of the main result}

One of the most natural discrete invariants of a bounded polytope $P\subset\R^d$ is its
real-dilate {\bf lattice-point enumerator}
\begin{equation}
L_P(t):=\bigl|tP\cap\Z^d\bigr|,
\qquad t>0.
\label{eq:LP-definition}
\end{equation}
When $P$ is rational and $t$ is restricted to the positive integers, this is the
classical {\bf Ehrhart quasi-polynomial}. Allowing all positive real values of $t$ gives a
considerably more sensitive invariant. 
We consider only one fixed translation vector $\mathbf y\in\R^d$, and we study
the function
\begin{equation}
t \rightarrow L_{P+\mathbf y}(t)
:=\bigl|\Z^d\cap t(P+\mathbf y)\bigr|,
\label{eq:translated-count}
\end{equation}
for various choices of `irrational' translations $\mathbf y$, and various different
domains in $t$. If there exists a single $\mathbf y\in\R^d$ such that the function
$t \rightarrow L_{P+\mathbf y}(t)$ uniquely determines any rational polytope $P$, for
some set of dilations $t$, then we call $\mathbf y$ a {\bf universal witness} for $P$.
We will sometimes also use the words {\bf universal translate} for the same vector
$\mathbf y$.

We write $\cP_d(\Q)$ for the collection of all full-dimensional rational polytopes in
$\R^d$.
Royer~\cite{Royer} proved that if
$P,Q\in\cP_d(\Q)$ satisfy
$L_{P+\mathbf w}(t)
=
L_{Q+\mathbf w}(t)$ 
for every $\mathbf w\in\Z^d$ and every $t>0$,
then $P=Q$.
Rocha-Neves \cite{Rocha-Neves} recently found a Fourier-analytic approach that recovers
and extends the uniqueness results for rational polytopes and symmetric convex bodies,
which were established by Royer \cite{Royer}.

It is useful to distinguish the universal witnesses obtained from all positive
real dilations from those obtained using only positive rational dilations. We define:
\begin{equation*}
\cU_d^{\mathrm{Real}}
:=
\left\{
\mathbf y\in\R^d
\ \middle|\
P,Q\in\cP_d(\Q),\
L_{P+\mathbf y}(t)=L_{Q+\mathbf y}(t)\ \text{for all }t>0
\Longrightarrow P=Q
\right\}.
\end{equation*}

\begin{equation*}
\cU_d^{\mathrm{Rational}}
:=
\left\{
\mathbf y\in\R^d
\ \middle|\
P,Q\in\cP_d(\Q),\
L_{P+\mathbf y}(t)=L_{Q+\mathbf y}(t)\ \text{for all }t\in\Q_{>0}
\Longrightarrow P=Q
\right\}.
\end{equation*}

\noindent
Finally, we define:
\begin{equation}
\cA_d
:=
\bigl\{
\mathbf y=(y_1,\ldots,y_d)\in\R^d:
1,y_1,\ldots,y_d\text{ are linearly independent over }\Q
\bigr\}.
\label{eq:A-d}
\end{equation}
Equivalently, $\cA_d$ is the complement of all rational affine hyperplanes in $\R^d$.
Our first main result gives an exact characterization of a universal witness.
\begin{theorem}[Exact criterion for a universal translate]
\label{thm:main}
For every dimension $d\geq 1$, we have:
\begin{equation}
\cU_d^{\mathrm{Rational}}
=
\cU_d^{\mathrm{Real}}
=
\cA_d.
\label{eq:exact-characterization}
\end{equation}
In particular, if $1,y_1,\ldots,y_d$ are linearly independent over $\Q$, then for every
pair $P,Q\in\cP_d(\Q)$,
\begin{equation}
L_{P+\mathbf y}(t)=L_{Q+\mathbf y}(t)
\quad\text{for all }t\in\Q_{>0}
\qquad\Longrightarrow\qquad
P=Q.
\label{eq:rational-sampling-main}
\end{equation}
Conversely, if $1,y_1,\ldots,y_d$ are linearly dependent over $\Q$, then there exist
distinct polytopes $P,Q\in\cP_d(\Q)$ satisfying
\begin{equation}
L_{P+\mathbf y}(t)=L_{Q+\mathbf y}(t)
\qquad\text{for every }t>0.
\label{eq:dependent-translate-obstruction}
\end{equation}
\end{theorem}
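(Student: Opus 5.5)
The plan is to prove the chain $\cU_d^{\mathrm{Rational}}\subseteq\cU_d^{\mathrm{Real}}\subseteq\cA_d\subseteq\cU_d^{\mathrm{Rational}}$.

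\emph{The two easy inclusions.} The first inclusion is formal. If $L_{P+\mathbf y}=L_{Q+\mathbf y}$ on all $t>0$, then the two functions agree in particular on $\Q_{>0}$. So a vector that is a witness using rational $t$ is also a witness using real $t$.

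For $\cU_d^{\mathrm{Real}}\subseteq\cA_d$ we argue by contraposition, which is exactly the construction in \eqref{eq:dependent-translate-obstruction}. Suppose $1,y_1,\dots,y_d$ are dependent over $\Q$. Then there is a primitive $\mathbf a\in\Z^d\setminus\{0\}$ and $b\in\Q$ with $\mathbf a\cdot\mathbf y=b$. The idea is to use a unimodular map $A\in GL_d(\Z)$ with $(A-I)\mathbf y\in\Q^d$.
\begin{itemize}[leftmargin=*]
\item For $d\ge 2$, pick $\mathbf u\in\Z^d\setminus\{0\}$ with $\mathbf a\cdot\mathbf u=0$ and set $A=I+\mathbf u\mathbf a^{T}$. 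This is a transvection, so $\det A=1$, and $(A-I)\mathbf y=b\,\mathbf u$ is rational.
\item For $d=1$, $\mathbf y$ is rational and we take $A=-1$.
\end{itemize}
Now set $Q:=AP+(A-I)\mathbf y$. Then $Q$ is a rational polytope and $Q+\mathbf y=A(P+\mathbf y)$. Since $A$ preserves $\Z^d$ and commutes with dilation, $L_{Q+\mathbf y}(t)=L_{P+\mathbf y}(t)$ for every $t>0$. It remains to choose $P$, for example a suitably asymmetric rational simplex, so that $Q\neq P$.

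\emph{The main inclusion $\cA_d\subseteq\cU_d^{\mathrm{Rational}}$.} This follows Royer's idea of reading off facets from discontinuities. Write each facet hyperplane of $P$ as $\{\mathbf a\cdot\mathbf x=c\}$, with $\mathbf a$ a primitive outward normal in $\Z^d$ and $c\in\Q$. Set $\lambda_{\mathbf a,c}:=c+\mathbf a\cdot\mathbf y$. By independence, $\lambda_{\mathbf a,c}$ is irrational, and $\lambda_{\mathbf a,c}=\lambda_{\mathbf a',c'}$ forces $(\mathbf a,c)=(\mathbf a',c')$.

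A lattice point $\mathbf m$ lies on the dilated hyperplane $t(H+\mathbf y)$ exactly when $\mathbf a\cdot\mathbf m=t\lambda_{\mathbf a,c}$, that is, when $t=k/\lambda_{\mathbf a,c}$ with $k\in\Z$. For $k\neq 0$ such $t$ is irrational. Hence for rational $t>0$ no lattice point lies on $\partial\, t(P+\mathbf y)$. It follows that $L_{P+\mathbf y}$ is locally constant near every rational $t$, and that its values on the dense set $\Q_{>0}$ determine both one-sided limits at every real $t$. In particular, the rational data determine the location and size of every jump.

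Next I would show that a single jump time cannot come from two different facet hyperplanes.
\begin{itemize}[leftmargin=*]
\item Suppose $k/\lambda=k'/\lambda'$ with $kk'\ne0$. Then $k'\lambda-k\lambda'=0$, and by $\Q$-independence $k'\mathbf a=k\mathbf a'$. So the normals are parallel, $\mathbf a'=\pm\mathbf a$.
\item The case $\mathbf a'=\mathbf a$ gives the same facet.
\item The case $\mathbf a'=-\mathbf a$ would additionally force $c'=-c$, i.e.\ the slab $-c'\le \mathbf a\cdot\mathbf x\le c$ has width $c+c'=0$. That contradicts full-dimensionality.
\item Similarly, no lattice point can lie on two non-parallel facet hyperplanes at the same $t$, so it cannot sit on a codimension-$2$ face.
\end{itemize}
Consequently, at $t_0=k/\lambda_{\mathbf a,c}$ the jump of $L_{P+\mathbf y}$ equals $\pm\bigl|\relint\bigl(t_0(F+\mathbf y)\bigr)\cap\Z^d\bigr|$, where $F$ is that facet. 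The sign is determined by $\sgn\lambda$, and there is no cancellation.

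\emph{The expected main obstacle.} The step I expect to be hardest is proving that each facet actually produces some nonzero jump. The lattice points on the hyperplane $\{\mathbf a\cdot\mathbf x=k\}$ form a translate of a fixed $(d-1)$-dimensional lattice. The dilated facet $t_0(F+\mathbf y)$ has $(d-1)$-volume of order $|k|^{d-1}$. So for $|k|$ large it must contain lattice points: I would first try comparing with a fundamental domain, i.e.\ showing that it contains a ball of radius larger than the covering radius. Given this, the set of irrational numbers $\lambda$ at which infinitely many jumps occur along $\{k/\lambda\}$ determines the set of pairs $(\mathbf a,c)$, again using independence. That set is the list of facet hyperplanes of $P$, and therefore determines $P$. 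So equal counts on $\Q_{>0}$ give $P=Q$.
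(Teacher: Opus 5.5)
Most of your argument matches the paper's. You use the transvection $I+\mathbf u\mathbf a^{T}$ (and $x\mapsto -x$ for $d=1$) for the obstruction, the incommensurability of translated offsets, the one-facet jump formula with sign $\sgn\lambda$, and the covering radius to show each facet is visible. Your remark that every contact time $k/\lambda$ is irrational is a neat way to see that rational samples fix all one-sided limits.

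\textbf{The gap is in your last step.} The progression $\{k/\lambda:k\in\Z\}$ does not change under $\lambda\mapsto-\lambda$. As literally stated, your set of $\lambda$'s is also closed under nonzero rational multiples, so you must normalize by primitivity of $\mathbf a$. Even after that normalization, the invariant recovers each facet only up to $(\mathbf a,c)\mapsto(-\mathbf a,-c)$. That is, it recovers the unoriented facet hyperplanes, and these do not determine $P$. For example, in $\R^3$ take
\[
P_1=\{\mathbf x:\ x_1,x_2,x_3\ge 0,\ x_1+x_2+x_3\ge 1,\ x_1+5x_2+5x_3\le 10\},
\]
\[
P_2=\{\mathbf x:\ x_1\le 0,\ x_2,x_3\ge 0,\ x_1+x_2+x_3\ge 1,\ x_1+5x_2+5x_3\le 10\}.
\]
These are distinct bounded rational polytopes with exactly the same five facet planes; for $P_2$, the last two inequalities force $x_1\ge -5/4$. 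Only the orientation of the plane $x_1=0$ differs. So for $\mathbf y\in\cA_3$, the set of $\lambda$ for which infinitely many jump times lie in $\frac1\lambda\Z$ is the same for $P_1+\mathbf y$ and $P_2+\mathbf y$. Hence your sentence ``that set is the list of facet hyperplanes of $P$, and therefore determines $P$'' fails as stated.

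\textbf{The repair uses something you already proved.} The jump at $k/\lambda_{\mathbf a,c}$ has sign $\sgn\lambda_{\mathbf a,c}$, computed with the outward normal. Keep only those $\lambda=c+\mathbf a\cdot\mathbf y$, with $\mathbf a$ primitive, for which some time in $\frac1\lambda\Z$ carries a jump of sign $\sgn\lambda$. This set is exactly the set of oriented offsets of $P$.

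The paper finishes this way, but by direct matching rather than by extracting an invariant:
\begin{enumerate}[label=\textup{(\roman*)}]
\item For a facet $i$ of $P$, it takes a single jump time $t_0=n/\beta_i$ from the covering-radius lemma.
\item By rational sampling, $t_0$ is a jump time of $L_{Q+\mathbf y}$ with the same signed jump.
\item The unique active facet $j$ of $Q$ at $t_0$ satisfies $t_0\delta_j\in\Z$.
\item Then $\beta_i/\delta_j\in\Q_{>0}$, and Lemma~\ref{lem:rational-ratio}(c) gives $(\mathbf a_i,b_i)=(\mathbf c_j,e_j)$.
\end{enumerate}
Only one jump per facet is needed.

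A minor point in the obstruction: what you need is $T(P)\neq P$ for $T(\mathbf x)=\mathbf x+(\mathbf a\cdot\mathbf x+b)\mathbf u$, not asymmetry of $P$. Since $T$ fixes exactly the hyperplane $\mathbf a\cdot\mathbf x=-b$ and affine maps preserve centroids, any rational $P$ whose centroid is off that hyperplane works.
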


\noindent
It is possible to choose more explicit irrational translation vectors, as follows.
\begin{corollary}[An explicit algebraic universal witness]
\label{cor:explicit-vector}
For each integer $d\geq 1$, let
\begin{equation}
\mathbf y^*
:=
\bigl(2^{1/(d+1)},2^{2/(d+1)},\ldots,2^{d/(d+1)}\bigr).
\label{eq:explicit-y}
\end{equation}
Then $\mathbf y^*\in\cU_d^{\mathrm{Rational}}$. Then the single function
\begin{equation}
t\longmapsto
\bigl|\Z^d\cap t(P+\mathbf y^*)\bigr|,
\qquad t\in\Q_{>0},
\label{eq:explicit-invariant}
\end{equation}
is a complete invariant of $P$
among the collection of full-dimensional rational polytopes in $\R^d$.
\end{corollary}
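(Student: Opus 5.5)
The corollary follows from Theorem~\ref{thm:main} once we show that $\mathbf y^*\in\cA_d$, that is, that $1,2^{1/(d+1)},\ldots,2^{d/(d+1)}$ are linearly independent over $\Q$. Indeed, the theorem gives $\cA_d=\cU_d^{\mathrm{Rational}}$. Membership in $\cU_d^{\mathrm{Rational}}$ is by definition the statement that $t\mapsto|\Z^d\cap t(P+\mathbf y^*)|$ on $\Q_{>0}$ separates distinct members of $\cP_d(\Q)$. This is exactly the claim that this function is a complete invariant.

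Let $n=d+1$ and $\alpha=2^{1/n}$. The plan is to show that $\alpha$ has degree $n$ over $\Q$, so that $1,\alpha,\ldots,\alpha^{n-1}$ form a $\Q$-basis of $\Q(\alpha)$, and in particular are linearly independent. The numbers $1,\alpha,\ldots,\alpha^{d}$ are precisely $1,y_1^*,\ldots,y_d^*$.

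To see that the degree is $n$, note that $\alpha$ is a root of $x^n-2$. This polynomial is irreducible over $\Q$ by Eisenstein's criterion at the prime $2$: every non-leading coefficient is divisible by $2$, the constant term $-2$ is not divisible by $4$, and the leading coefficient is $1$. Hence $x^n-2$ is the minimal polynomial of $\alpha$. Now suppose $\sum_{k=0}^{d}c_k\alpha^k=0$ with $c_k\in\Q$ not all zero. Then $\alpha$ would be a root of the nonzero polynomial $\sum_{k=0}^{d} c_k x^k$, whose degree is at most $d<n$. This contradicts minimality, so all $c_k=0$ and $\mathbf y^*\in\cA_d$.

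No step here presents a real obstacle. All the substance lies in the sufficiency direction of Theorem~\ref{thm:main}, which we take as given. The only point needing care is that the corollary works with rational dilations $t\in\Q_{>0}$, so we must invoke the equality $\cA_d=\cU_d^{\mathrm{Rational}}$ specifically, not just $\cU_d^{\mathrm{Real}}$. The theorem provides this equality.
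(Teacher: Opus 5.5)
Your proposal is correct and follows essentially the same route as the paper's proof. Both apply Eisenstein's criterion at $2$ to $X^{d+1}-2$, use the resulting degree-$(d+1)$ minimal polynomial of $2^{1/(d+1)}$ to rule out a nontrivial rational relation among $1,\alpha,\ldots,\alpha^d$, conclude $\mathbf y^*\in\cA_d$, and then invoke $\cA_d=\cU_d^{\mathrm{Rational}}$ from Theorem~\ref{thm:main}.
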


In dimension one, for example, every irrational number is a
universal witness, and translation by $\sqrt{2}$ is already sufficient.
The set of all universal witnesses also has a particularly simple topological and
measure-theoretic description.

\begin{corollary}[Size of the universal set]
\label{cor:size-universal-set}
For every $d\geq 1$, the set of universal translation vectors may be expressed as:
\begin{equation}
\cA_d
=
\bigcap_{\substack{(q_0,\ldots,q_d)\in\Z^{d+1}\\
(q_1,\ldots,q_d)\neq\mathbf 0}}
\left\{
\mathbf y\in\R^d:
q_0+q_1y_1+\cdots+q_dy_d\neq 0
\right\}.
\label{eq:A-d-Gdelta}
\end{equation}
Consequently, $\cA_d$ is a dense $G_\delta$-subset of $\R^d$, and its complement has
Lebesgue measure zero.
\end{corollary}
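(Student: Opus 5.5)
The plan is to unwind the definition \eqref{eq:A-d} directly. By Theorem~\ref{thm:main}, $\cA_d$ is exactly the set of universal translation vectors, so everything reduces to a statement about $\cA_d$ alone. First I will prove the set-theoretic identity \eqref{eq:A-d-Gdelta}. Then the topological and measure-theoretic consequences follow from the Baire category theorem and countable subadditivity of Lebesgue measure.

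For the identity \eqref{eq:A-d-Gdelta}, suppose $\mathbf y\notin\cA_d$.
\begin{itemize}
\item By definition there are rationals $r_0,\ldots,r_d$, not all zero, with $r_0+r_1y_1+\cdots+r_dy_d=0$.
\item Multiplying by a common denominator gives integers $(q_0,\ldots,q_d)\neq\mathbf 0$ with $q_0+q_1y_1+\cdots+q_dy_d=0$.
\item If $(q_1,\ldots,q_d)=\mathbf 0$, the relation forces $q_0=0$, which contradicts $(q_0,\ldots,q_d)\neq\mathbf 0$. Hence $(q_1,\ldots,q_d)\neq\mathbf 0$.
\item So $\mathbf y$ lies outside one of the sets in the intersection, and therefore outside the intersection.
\end{itemize}
Conversely, suppose $\mathbf y$ fails to lie in the intersection. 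Then some integer vector with $(q_1,\ldots,q_d)\neq\mathbf 0$ gives a nontrivial rational relation among $1,y_1,\ldots,y_d$, so $\mathbf y\notin\cA_d$. This proves \eqref{eq:A-d-Gdelta}. It also justifies the remark that $\cA_d$ is the complement of the union of all rational affine hyperplanes
\[
H_{\mathbf q}=\{\mathbf y:\ q_0+q_1y_1+\cdots+q_dy_d=0\}.
\]

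For the consequences, note three facts about these hyperplanes.
\begin{itemize}
\item Each $H_{\mathbf q}$ with $(q_1,\ldots,q_d)\neq\mathbf 0$ is a genuine affine hyperplane. It is therefore closed, has empty interior, and has $d$-dimensional Lebesgue measure zero.
\item Its complement $\R^d\setminus H_{\mathbf q}$ is open and dense.
\item The index set of integer vectors $(q_0,\ldots,q_d)\in\Z^{d+1}$ is countable.
\end{itemize}
Hence $\cA_d$ is a countable intersection of open sets, i.e.\ a $G_\delta$-set. By the Baire category theorem in the complete metric space $\R^d$, a countable intersection of open dense sets is dense, so $\cA_d$ is dense. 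Finally, $\R^d\setminus\cA_d=\bigcup_{\mathbf q}H_{\mathbf q}$ is a countable union of null sets, so it has measure zero by countable subadditivity.

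There is no real obstacle here. The only point requiring care is the bookkeeping in the first step: one must exclude the degenerate vectors with $(q_1,\ldots,q_d)=\mathbf 0$, since for those the set in \eqref{eq:A-d-Gdelta} would be empty rather than a hyperplane complement. The argument above shows that this exclusion loses nothing, because any such vector with $q_0\neq 0$ never defines a relation satisfied by $\mathbf y$.
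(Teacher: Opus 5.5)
Your proof is correct and is essentially the paper's argument: you clear denominators to obtain the identity, and you check more carefully than the paper that a relation with $(q_1,\ldots,q_d)=\mathbf 0$ must be trivial. You then use that each rational affine hyperplane is closed, nowhere dense and Lebesgue-null, together with countability, the Baire category theorem and countable subadditivity. One small slip in your closing remark: when $(q_1,\ldots,q_d)=\mathbf 0$ but $q_0\neq 0$, the set $\{\mathbf y: q_0\neq 0\}$ is all of $\R^d$ rather than empty (only $\mathbf q=\mathbf 0$ gives the empty set), so those indices could be included harmlessly and only the zero vector genuinely has to be excluded.
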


\begin{remark}
For the sake of intuitive understanding, we briefly outline both the necessity and
sufficiency of $\cA_d$ in the proof of Theorem~\ref{thm:main}.
First, we express a rational polytope by its primitive facet description:
\begin{equation}
P
=
\bigcap_{i=1}^{N}
\left\{
\mathbf x\in\R^d:
\langle\mathbf a_i,\mathbf x\rangle\leq b_i
\right\},
\qquad
\mathbf a_i\in\Z^d\text{ primitive},
\quad b_i\in\Q.
\label{eq:intro-facet-form}
\end{equation}
The constant $b_i$ is called a {\bf facet offset}.
It follows that 
\begin{equation}
\label{eq:intro-facet-form 2}
P + \mathbf y
=
\bigcap_{i=1}^{N}
\left\{
\mathbf x\in\R^d:
\langle\mathbf a_i,\mathbf x\rangle\leq\beta_i
\right\}, 
\end{equation}
where 
$\beta_i:=b_i+\langle\mathbf a_i,\mathbf y\rangle$.

When $\mathbf y\in\cA_d$, all of the numbers $\beta_i$ are nonzero, and the ratio of
two distinct facet offsets is never rational. Thus two different facets cannot meet the
lattice at the same dilation parameter. On the other hand, each individual facet meets
the lattice at infinitely many sufficiently large dilation parameters. Its lattice points
therefore create an infinite sequence of isolated jumps in~\eqref{eq:translated-count}.
The location and direction of such a jump identify the corresponding primitive rational
facet inequality. This gives the sufficiency of~\eqref{eq:A-d}.

The converse has a different character. A rational affine relation among
$1,y_1,\ldots,y_d$ allows us to construct a nonidentity unimodular linear map $U$ for
which $(U-I)\mathbf y$ is rational. If
\begin{equation}
Q:=UP+(U-I)\mathbf y,
\label{eq:intro-affine-obstruction}
\end{equation}
then
\begin{equation}
Q+\mathbf y=U(P+\mathbf y).
\label{eq:intro-lattice-equivalence}
\end{equation}
The unimodular map $U$ makes the two translated counting functions identical.
\hfill $\hexago$
\end{remark}

The recent work of Higashitani, Murai, and Yoshinaga
\cite{Higashitani-Murai-Yoshinaga} also contains a reconstruction theorem closely
related to Royer's. The authors of \cite{Higashitani-Murai-Yoshinaga} show that the
family of Ehrhart quasi-polynomials
$
\left\{
L_{\mathcal P+\mathbf v}(t)
\ | \
\mathbf v\in\mathbb Q^d
\right\}
$
with the dilation parameter $t$ restricted to nonnegative integers, determines a
rational polytope \(\mathcal P\) up to integral translations. As they note, a stronger
form had previously appeared in Alhajjar’s thesis \cite{Alhajjar2017}.

The paper is structured as follows. In Section~\ref{sec:facets}, we analyze the facet
offsets and the resulting one-sided jumps. In Section~\ref{sec:exact-proof}, we prove the
exact characterization, the affine-lattice obstruction, and the explicit algebraic
corollary.

\section{Lemmas for the facet offsets, and isolated discontinuities}
\label{sec:facets}

We begin with the elementary arithmetic fact that makes the facet argument possible.
The conclusion applies equally well to facets belonging to two different rational
polytopes.

\begin{lemma}[Rational ratios and incommensurability of translated facet offsets]
\label{lem:rational-ratio}
Let $\mathbf y\in\cA_d$. Let $\mathbf a,\mathbf c\in\Z^d$ be nonzero primitive
vectors, and let $b,e\in\Q$. Define
\begin{equation}
\beta:=b+\langle\mathbf a,\mathbf y\rangle,
\qquad
\delta:=e+\langle\mathbf c,\mathbf y\rangle.
\label{eq:beta-delta}
\end{equation}
Then the following statements hold.
\begin{enumerate}[label=\textup{(\alph*)}]
\item
We have $\beta\neq 0$ and $\delta\neq 0$.

\item
If
$\frac{\beta}{\delta}\in\Q$,
then
\begin{equation}
(\mathbf a,b)=(\mathbf c,e)
\qquad\text{or}\qquad
(\mathbf a,b)=-(\mathbf c,e).
\label{eq:plus-minus-facets}
\end{equation}

\item
Under the hypothesis of part~\textup{(b)}, if $\beta$ and $\delta$ have the same
sign, then only the first alternative in~\eqref{eq:plus-minus-facets} is possible.

\item
Suppose that $P\in\cP_d(\Q)$ has the irredundant primitive facet description
\eqref{eq:intro-facet-form}. We consider the $i$'th facet defined by 
$F_i
:=
P\cap
\left\{
\mathbf x\in\R^d:
\langle\mathbf a_i,\mathbf x\rangle=b_i
\right\}$.
We note that 
$
P + \mathbf y
=
\bigcap_{i=1}^{N}
\left\{
\mathbf x\in\R^d:
\langle\mathbf a_i,\mathbf x\rangle\leq\beta_i
\right\}$,
where $\beta_i:=b_i+\langle\mathbf a_i,\mathbf y\rangle$.

Then each $\beta_i$ is nonzero, and the translated facet offsets are pairwise
incommensurable:
\begin{equation}
\frac{\beta_i}{\beta_j}\notin\Q,
\label{eq:pairwise-irrational-ratios}
\end{equation}
for all $i \not= j$.
\end{enumerate}
\end{lemma}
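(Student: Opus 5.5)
The whole lemma follows from one observation: for $\mathbf y\in\cA_d$, any rational relation $q_0+\langle\mathbf q,\mathbf y\rangle=0$ with $q_0\in\Q$ and $\mathbf q\in\Q^d$ forces $q_0=0$ and $\mathbf q=\mathbf 0$. This is just the $\Q$-linear independence of $1,y_1,\ldots,y_d$ from \eqref{eq:A-d}, after clearing denominators. I would prove (a)--(d) in order, each time reducing the claim to such a relation.

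For (a), suppose $\beta=0$. Then $b+\langle\mathbf a,\mathbf y\rangle=0$ is a rational relation, so $\mathbf a=\mathbf 0$. This contradicts the assumption that $\mathbf a$ is primitive, hence nonzero. The same argument gives $\delta\neq0$.

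For (b), write $\beta=r\delta$ with $r\in\Q$; we have $r\neq0$ by (a). Rearranging gives
\[
(b-re)+\langle\mathbf a-r\mathbf c,\mathbf y\rangle=0,
\]
so $b=re$ and $\mathbf a=r\mathbf c$. Now $\mathbf a$ and $\mathbf c$ are both primitive integer vectors, and $\mathbf a=r\mathbf c$ with $r$ rational. Write $r=p/q$ in lowest terms, so $q\mathbf a=p\mathbf c$. Then $q$ divides every entry of $p\mathbf c$, hence every entry of $\mathbf c$, since $\gcd(p,q)=1$. Primitivity of $\mathbf c$ gives $q=1$, and symmetrically $p=\pm1$. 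So $r=\pm1$, which yields \eqref{eq:plus-minus-facets}. This primitivity step is the only point where care is needed.

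Part (c) is immediate: $\beta=r\delta$ with $r=\pm1$, and if $\beta,\delta$ have the same sign then $r=1$.

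For (d), nonvanishing of each $\beta_i$ is (a). Suppose $\beta_i/\beta_j\in\Q$ for some $i\neq j$. By (b), $(\mathbf a_i,b_i)=\pm(\mathbf a_j,b_j)$.
\begin{itemize}
\item The plus sign would make the $i$th and $j$th inequalities identical, contradicting irredundance of the facet description.
\item The minus sign gives $\mathbf a_i=-\mathbf a_j$ and $b_i=-b_j$. Then $P$ satisfies both $\langle\mathbf a_i,\mathbf x\rangle\le b_i$ and $\langle\mathbf a_i,\mathbf x\rangle\ge b_i$, so $P$ lies in a hyperplane. This contradicts full-dimensionality.
\end{itemize}
This proves \eqref{eq:pairwise-irrational-ratios}.
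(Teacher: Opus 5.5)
Your proposal is correct and follows the paper's proof essentially step for step. Each part reduces to a rational affine relation among $1,y_1,\ldots,y_d$, and (d) is handled through the same two cases (identical inequalities versus opposite inequalities). Your lowest-terms argument for $r=\pm1$ simply spells out the primitivity step that the paper asserts in one line.
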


\begin{proof}
\noindent
\textup{(a)}
If $\beta=0$, then
$b+a_1y_1+\cdots+a_dy_d=0$
is a nontrivial rational linear relation among $1,y_1,\ldots,y_d$, contrary to
$\mathbf y\in\cA_d$. Thus $\beta\neq 0$, and the same argument gives
$\delta\neq 0$.

\medskip
\noindent
\textup{(b)}
Suppose that $\beta=r\delta$ for some $r\in\Q$. Expanding this equality gives
\begin{equation}
(b-re)+\langle\mathbf a-r\mathbf c,\mathbf y\rangle=0.
\label{eq:ratio-affine-relation}
\end{equation}
The defining property of $\cA_d$ implies that
$\mathbf a=r\mathbf c$ and $b=re$. Since $\mathbf a$ and $\mathbf c$ are primitive
integer vectors, their rational proportionality forces
\begin{equation}
r=1
\qquad\text{or}\qquad
r=-1.
\label{eq:r-plus-minus-one}
\end{equation}
This proves~\eqref{eq:plus-minus-facets}.

\medskip
\noindent
\textup{(c)}
If $\beta$ and $\delta$ have the same sign, then
$r=\beta/\delta>0$. Hence $r=1$, and only the first alternative in
\eqref{eq:plus-minus-facets} is possible.

\medskip
\noindent
\textup{(d)}
 Part~\textup{(a)} gives $\beta_i\neq 0$ for
every $i$. Now fix $i\neq j$ and suppose that $\beta_i/\beta_j\in\Q$.
Part~\textup{(b)}, applied to $(\mathbf a_i,b_i)$ and $(\mathbf a_j,b_j)$, gives
\begin{equation}
(\mathbf a_i,b_i)=(\mathbf a_j,b_j)
\qquad\text{or}\qquad
(\mathbf a_i,b_i)=-(\mathbf a_j,b_j).
\label{eq:lemma-plus-minus-facet-data}
\end{equation}
The first alternative repeats the same facet inequality, contradicting irredundancy.
The second forces $P$ to satisfy both
$\langle\mathbf a_i,\mathbf x\rangle\leq b_i$ and
$\langle\mathbf a_i,\mathbf x\rangle\geq b_i$, placing $P$ in the hyperplane
$\langle\mathbf a_i,\mathbf x\rangle=b_i$ and contradicting the full-dimensionality
of $P$. Therefore~\eqref{eq:pairwise-irrational-ratios} holds.
\end{proof}

\bigskip
We next describe the discontinuities of the counting function for $K:= P + \mathbf y$. For $t_0>0$, we define
\begin{equation}
L_K(t_0^-)
:=
\lim_{t\uparrow t_0}L_K(t),
\qquad
L_K(t_0^+)
:=
\lim_{t\downarrow t_0}L_K(t),
\label{eq:one-sided-limits}
\end{equation}
and we also define the signed jump
\begin{equation}
Jump_{K}(t_0)
:=
L_K(t_0^+)-L_K(t_0^-).
\label{eq:signed-jump}
\end{equation}

\begin{lemma}[The one-facet jump formula]
\label{lem:one-facet-jump}
Let $P\in\cP_d(\Q)$ and $\mathbf y\in\cA_d$, and let $K:=P+\mathbf y$ be written as
in~\eqref{eq:intro-facet-form}. Then the set of dilations that measures boundary integer points:
\begin{equation}
\mathcal D_K
:=
\bigl\{t>0:\partial(tK)\cap\Z^d\neq\varnothing\bigr\}
\label{eq:D-K}
\end{equation}
is locally finite. If $t_0\in\mathcal D_K$, then there is a unique index $i$ such that
\begin{equation}
\partial(t_0K)\cap\Z^d
\subseteq
\relint(t_0(F_i+\mathbf y)).
\label{eq:boundary-one-facet}
\end{equation}
Moreover,
\begin{equation}
Jump_{K}(t_0)
=
\sgn(\beta_i)
\bigl|\partial(t_0K)\cap\Z^d\bigr|
\neq 0.
\label{eq:jump-formula}
\end{equation}
Thus every lattice contact with the boundary gives a genuine jump, and the sign of the
jump is the sign of the corresponding translated facet offset.
\end{lemma}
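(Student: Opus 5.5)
The plan is to work directly with the facet description $K=P+\mathbf y=\bigcap_{i}\{\mathbf x:\langle\mathbf a_i,\mathbf x\rangle\le\beta_i\}$ and to exploit the integrality of $\langle\mathbf a_i,\mathbf m\rangle$ for $\mathbf m\in\Z^d$, combined with Lemma~\ref{lem:rational-ratio}.

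First I will prove local finiteness. If $\mathbf m\in\partial(tK)\cap\Z^d$, then $\mathbf m$ satisfies $\langle\mathbf a_i,\mathbf m\rangle=t\beta_i$ for some $i$. Since the left side is an integer $n$ and $\beta_i\neq 0$ by Lemma~\ref{lem:rational-ratio}(a), we get $t=n/\beta_i$ with $n\neq 0$, because $t>0$. Hence
\[
\mathcal D_K\subseteq\bigcup_{i=1}^N\tfrac{1}{|\beta_i|}\Z_{>0},
\]
which is a finite union of discrete sets, and so $\mathcal D_K$ is locally finite.

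Next comes the uniqueness of the facet. Suppose lattice points $\mathbf m,\mathbf m'\in\partial(t_0K)$ lie on facet hyperplanes $i$ and $j$ respectively; the case $\mathbf m=\mathbf m'$ on two facets is included. Then $t_0\beta_i$ and $t_0\beta_j$ are both nonzero integers, so $\beta_i/\beta_j\in\Q$. By Lemma~\ref{lem:rational-ratio}(d) this forces $i=j$. Consequently every boundary lattice point lies on the $i$-th facet hyperplane and on no other facet hyperplane, which means it lies in $\relint(t_0(F_i+\mathbf y))$. This proves \eqref{eq:boundary-one-facet}.

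For the jump formula I will fix a bounded neighbourhood $B$ of $t_0K$ that contains $tK$ for all $t\in[t_0/2,2t_0]$. Only finitely many lattice points lie in $B$, so I can choose a single $\varepsilon>0$ that works for all of them simultaneously. I then sort these lattice points into three groups.
\begin{enumerate}[label=(\roman*)]
\item Points in $\operatorname{int}(t_0K)$ satisfy all inequalities strictly. By continuity in $t$ they remain in $tK$ for $|t-t_0|<\varepsilon$.
\item Points outside $t_0K$ strictly violate some inequality, and so they remain outside $tK$ for $|t-t_0|<\varepsilon$.
\item A boundary point $\mathbf m$ satisfies $\langle\mathbf a_i,\mathbf m\rangle=t_0\beta_i$ and all other inequalities strictly. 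Near $t_0$, it belongs to $tK$ exactly when $t_0\beta_i\le t\beta_i$. If $\beta_i>0$, this holds exactly for $t\ge t_0$, and $\mathbf m$ contributes $+1$ to the jump. If $\beta_i<0$, it holds exactly for $t\le t_0$, and $\mathbf m$ contributes $-1$.
\end{enumerate}
Summing these contributions gives \eqref{eq:jump-formula}. The jump is nonzero because $t_0\in\mathcal D_K$ guarantees that the boundary count is positive.

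The main obstacle is the uniform choice of $\varepsilon$, that is, ensuring that no other lattice points enter or leave $tK$ near $t_0$. The finiteness of $B\cap\Z^d$ handles this, since it makes the required continuity argument uniform over all relevant lattice points.
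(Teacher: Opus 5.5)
Your proof is correct and follows essentially the same route as the paper. Contact times of the form $n/\beta_i$ give local finiteness, Lemma~\ref{lem:rational-ratio}(d) rules out two facets at a common contact time, and the sign of $\beta_i$ decides whether the active inequality $\langle\mathbf a_i,\mathbf m\rangle\le t\beta_i$ switches on or off at $t_0$. The only difference is cosmetic: you secure the one-sided behaviour with a uniform $\varepsilon$ over the finitely many lattice points in a bounded neighbourhood, whereas the paper deduces it from the constancy of $L_K$ on intervals avoiding the locally finite set $\mathcal D_K$.
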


\begin{proof}
Suppose that a lattice point $\mathbf m$ lies on the $i$-th facet of $tK$. Then
\begin{equation}
\langle\mathbf a_i,\mathbf m\rangle=t\beta_i.
\label{eq:lattice-contact-equation}
\end{equation}
The left-hand side is an integer, so every possible contact time for the $i$-th facet has
the form
\begin{equation}
t=\frac{n}{\beta_i},
\qquad
n\in\Z,
\qquad
\frac{n}{\beta_i}>0.
\label{eq:contact-times}
\end{equation}
Only finitely many such values lie in a fixed compact interval of $(0,\infty)$.
Since there are only finitely many facets, $\mathcal D_K$ is locally finite. On an
interval containing no point of $\mathcal D_K$, no lattice point can change its membership
in $tK$, so $L_K(t)$ is constant there. In particular, the one-sided limits
in~\eqref{eq:one-sided-limits} exist.

If a time $t_0$ were associated with two distinct facets $i$ and $j$, then for some
nonzero integers $n_i,n_j$ we would have
\begin{equation}
t_0\beta_i=n_i,
\qquad
t_0\beta_j=n_j,
\label{eq:two-facet-contact}
\end{equation}
and hence
\begin{equation}
\frac{\beta_i}{\beta_j}
=
\frac{n_i}{n_j}
\in\Q,
\label{eq:two-facet-ratio}
\end{equation}
contrary to~\eqref{eq:pairwise-irrational-ratios}. Thus only one facet can meet the
lattice at $t_0$. A lattice point cannot lie on a lower-dimensional face, because such a
point would lie on at least two facets. This proves~\eqref{eq:boundary-one-facet}.

It remains to compute the direction of the jump. Let
$\mathbf m\in\partial(t_0K)\cap\Z^d$. All inequalities other than the $i$-th are strict
at $\mathbf m$, and hence remain strict for $t$ sufficiently close to $t_0$. The active
inequality is
\begin{equation}
\langle\mathbf a_i,\mathbf m\rangle\leq t\beta_i.
\label{eq:active-inequality}
\end{equation}
If $\beta_i>0$, then $\mathbf m$ is outside $tK$ immediately before $t_0$ and inside
$tK$ immediately after $t_0$. If $\beta_i<0$, the reverse occurs. Every boundary
lattice point lies on the same facet and therefore moves in the same direction. Summing
their contributions gives~\eqref{eq:jump-formula}.
\end{proof}

The next observation explains why knowing the values only at rational dilation parameters
is enough. It is simply the density of $\Q$, together with the local finiteness in
Lemma~\ref{lem:one-facet-jump}.

\begin{lemma}[Rational samples detect every jump]
\label{lem:rational-samples-detect-jumps}
Let $P,Q\in\cP_d(\Q)$ and $\mathbf y\in\cA_d$. Suppose that
\begin{equation}
L_{P+\mathbf y}(t)=L_{Q+\mathbf y}(t)
\qquad\text{for every }t\in\Q_{>0}.
\label{eq:rational-sample-equality}
\end{equation}
If $t_0$ is a jump time for $L_{P+\mathbf y}$, then it is also a jump time for
$L_{Q+\mathbf y}$, and the two signed jumps are equal.
\end{lemma}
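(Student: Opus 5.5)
The plan is to show that the one-sided limits of both counting functions at any $t_0>0$ can be computed from values at rational points alone; then equality on $\Q_{>0}$ forces equality of the one-sided limits, and hence of the signed jumps.

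Set $K:=P+\mathbf y$ and $K':=Q+\mathbf y$. By Lemma~\ref{lem:one-facet-jump}, both $\mathcal D_K$ and $\mathcal D_{K'}$ are locally finite, so their union $\mathcal D:=\mathcal D_K\cup\mathcal D_{K'}$ is locally finite as well. Fix any $t_0>0$, and choose $\varepsilon>0$ so that the punctured interval $(t_0-\varepsilon,t_0)\cup(t_0,t_0+\varepsilon)$ contains no point of $\mathcal D$. As in the proof of Lemma~\ref{lem:one-facet-jump}, no lattice point changes its membership in $tK$ or $tK'$ for $t$ in either open subinterval. Hence $L_K$ and $L_{K'}$ are each constant on $(t_0-\varepsilon,t_0)$ and on $(t_0,t_0+\varepsilon)$, and these constants are exactly the one-sided limits $L_K(t_0^{\pm})$ and $L_{K'}(t_0^{\pm})$.

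By density of $\Q$, pick rationals $r_-\in(t_0-\varepsilon,t_0)$ and $r_+\in(t_0,t_0+\varepsilon)$ (with $t_0-\varepsilon>0$). The hypothesis \eqref{eq:rational-sample-equality} gives $L_K(r_\pm)=L_{K'}(r_\pm)$, and therefore
\[
L_K(t_0^{\pm})=L_{K'}(t_0^{\pm}).
\]
Subtracting the left limit from the right limit yields $Jump_K(t_0)=Jump_{K'}(t_0)$ for every $t_0>0$. In particular, if $t_0$ is a jump time for $L_{P+\mathbf y}$, meaning $Jump_K(t_0)\neq0$, then $Jump_{K'}(t_0)\neq 0$ as well, so $t_0$ is a jump time for $L_{Q+\mathbf y}$ with the same signed jump.

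The only point needing care is choosing $\varepsilon$ to work simultaneously for both polytopes. Since the union of two locally finite subsets of $(0,\infty)$ is locally finite, this causes no difficulty. Note that $t_0$ itself need not be rational; the argument never samples at $t_0$, only at nearby rationals on each side.
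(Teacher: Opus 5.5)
Your proof is correct and follows essentially the same route as the paper: local finiteness of the discontinuity sets gives one-sided limits, and rational samples on each side of $t_0$ force those limits, and hence the signed jumps, to agree. The only cosmetic difference is that you pick a single rational on each side inside a common jump-free punctured neighborhood, whereas the paper takes rational sequences $r_n\uparrow t_0$, $s_n\downarrow t_0$ and passes to the limit.
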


\begin{proof}
By local finiteness, the functions $L_{P+\mathbf y}$ and
$L_{Q+\mathbf y}$ have one-sided limits at $t_0$. Choose rational sequences
\begin{equation}
r_n\uparrow t_0,
\qquad
s_n\downarrow t_0.
\label{eq:rational-sequences}
\end{equation}
The assumption~\eqref{eq:rational-sample-equality} gives
$L_{P+\mathbf y}(r_n)=L_{Q+\mathbf y}(r_n)$ and
$L_{P+\mathbf y}(s_n)=L_{Q+\mathbf y}(s_n)$
for all $n$. Passing to the one-sided limits yields
\begin{equation}
L_{P+\mathbf y}(t_0^-)=L_{Q+\mathbf y}(t_0^-),
\qquad
L_{P+\mathbf y}(t_0^+)=L_{Q+\mathbf y}(t_0^+).
\label{eq:one-sided-equality}
\end{equation}
Since the two limits for $P+\mathbf y$ are different, the two limits for
$Q+\mathbf y$ are different as well. Thus $t_0$ is a jump time for $Q+\mathbf y$,
and~\eqref{eq:one-sided-equality} gives equality of the signed jumps.
\end{proof}

The final geometric ingredient will show that every facet is visible in the counting function.
One of the main ingredients in the proof of the following Lemma is the covering radius of the sublattice in an integral affine
hyperplane.


\begin{lemma}[Every facet produces infinitely many jumps]
\label{lem:facet-visible}
Let $P\in\cP_d(\Q)$ and $\mathbf y\in\cA_d$. Let $F$ be any facet of
$P$, written as
$F
=
P\cap
\left\{
\mathbf x\in\R^d:
\langle\mathbf a,\mathbf x\rangle=b
\right\}$,
where $\mathbf a\in\Z^d$ is primitive, $b\in\Q$, and
\begin{equation}
\beta
:=
b+\langle\mathbf a,\mathbf y\rangle.
\label{eq:fixed-translated-offset}
\end{equation}
Then $\beta\neq 0$, and there exists an integer
$N=N(F,\mathbf y)\geq 1$ such that
\begin{equation}
\relint\left(
\frac{n}{\beta}(F+\mathbf y)
\right)
\cap\Z^d
\neq\varnothing
\label{eq:facet-lattice-point}
\end{equation}
for every integer $n$ satisfying
\begin{equation}
n\beta>0,
\qquad
|n|\geq N.
\label{eq:large-eligible-n}
\end{equation}
Consequently, the translated facet $F+\mathbf y$ produces a nonzero
jump at every sufficiently large eligible time
\begin{equation}
t
=
\frac{n}{\beta}
>
0.
\label{eq:eligible-time}
\end{equation}
In particular, every facet produces infinitely many nonzero jumps.
\end{lemma}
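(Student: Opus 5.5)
Write $t_n:=n/\beta$ for eligible $n$ (those with $n\beta>0$); Lemma~\ref{lem:rational-ratio}(a) already gives $\beta\neq 0$. The plan is to rewrite the condition \eqref{eq:facet-lattice-point} as a problem about a fixed $(d-1)$-dimensional lattice in the hyperplane $H_0:=\{\mathbf x:\langle\mathbf a,\mathbf x\rangle=0\}$, and then apply a covering-radius argument. The lattice points of the hyperplane $H_n:=\{\langle\mathbf a,\mathbf x\rangle=n\}$ form a translate $\mathbf p_n+\Lambda$ of $\Lambda:=\Z^d\cap H_0$. Here $\mathbf p_n\in\Z^d$ satisfies $\langle\mathbf a,\mathbf p_n\rangle=n$, which is possible because $\mathbf a$ is primitive. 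Since $\mathbf a$ is primitive, $\Lambda$ is a lattice of full rank $d-1$ in $H_0$, so it has a finite covering radius $\mu$ with respect to the Euclidean norm on $H_0$. This means that every closed ball of radius $\mu$ in $H_n$ centered at a point of $H_n$ contains a point of $\mathbf p_n+\Lambda$.

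Next, the dilated facet $t_n(F+\mathbf y)$ lies in $H_n$, because every $\mathbf x\in F+\mathbf y$ satisfies $\langle\mathbf a,\mathbf x\rangle=\beta$. Since $F$ is a facet of a full-dimensional polytope, $F+\mathbf y$ is $(d-1)$-dimensional. Its relative interior therefore contains a closed $(d-1)$-ball of some radius $\rho>0$ in the hyperplane $\{\langle\mathbf a,\mathbf x\rangle=\beta\}$, centered at some point $\mathbf c$. Dilating by $t_n>0$ shows that $\relint(t_n(F+\mathbf y))$ contains the closed ball in $H_n$ of radius $t_n\rho$ centered at $t_n\mathbf c$. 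Choose $N$ with $N\rho/|\beta|>\mu$. Then for every eligible $n$ with $|n|\ge N$ we have $t_n=|n|/|\beta|$, so the ball has radius greater than $\mu$ and contains a lattice point of $\mathbf p_n+\Lambda\subset\Z^d$. This proves \eqref{eq:facet-lattice-point}. In the degenerate case $d=1$ the facet is a single point, $H_n\cap\Z^d$ is the single integer $n/a=\pm n$, and the relative interior of a point is the point itself, so the argument is immediate.

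For the consequence, fix such an $n$ and set $t_0=t_n$. Then $t_0\in\mathcal D_K$ with $K:=P+\mathbf y$. By Lemma~\ref{lem:one-facet-jump}, a unique facet is responsible for all boundary lattice points at $t_0$. Because we have exhibited a lattice point in $\relint(t_0(F+\mathbf y))$, that facet must be $F$. The jump formula \eqref{eq:jump-formula} then gives $Jump_K(t_0)=\sgn(\beta)\,|\partial(t_0K)\cap\Z^d|\neq 0$. There are infinitely many eligible $n$ with $|n|\ge N$: the positive ones if $\beta>0$, the negative ones if $\beta<0$. This yields infinitely many nonzero jumps.

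The main obstacle is the covering-radius step. It needs two facts: that $\Lambda$ has full rank $d-1$ in $H_0$, which holds because $H_0$ is a rational hyperplane; and that the lattice slices $H_n\cap\Z^d$ are all translates of the same $\Lambda$, which is where primitivity of $\mathbf a$ is used. Having a single $\mu$ for all $n$ is what makes the threshold $N$ uniform. I would also check that the relative-interior ball is taken in the correct affine hyperplane, so that dilation maps it into $H_n$.
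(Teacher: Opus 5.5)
Your proposal is correct and follows essentially the same route as the paper: it dilates an inscribed relative ball of $F+\mathbf y$ into $H_n$, uses primitivity of $\mathbf a$ to write every slice $H_n\cap\Z^d$ as a translate of the fixed rank-$(d-1)$ lattice $\Lambda$, picks the uniform threshold $N\rho/|\beta|>\mu$ from the covering radius, and then appeals to the one-facet jump lemma. Your separate treatment of $d=1$ is a harmless addition, since the paper's argument already covers that case with covering radius $0$.
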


\begin{proof}
Fix the facet $F$. All the notation introduced below depends on this
fixed facet and on $\mathbf y$; we suppress this dependence.
For any $s\in\R$, define the affine hyperplane
\begin{equation}
H_s
:=
\left\{
\mathbf x\in\R^d:
\langle\mathbf a,\mathbf x\rangle=s
\right\}.
\label{eq:affine-hyperplane-H-s}
\end{equation}
Thus the affine hull of $F+\mathbf y$ is $H_\beta$. Choose
$\mathbf c\in\relint(F+\mathbf y)$ and $\rho>0$ such that the ball
\begin{equation}
B_{H_\beta}(\mathbf c,\rho)
\subseteq
\relint(F+\mathbf y).
\label{eq:relative-inball}
\end{equation}

Geometrically, the argument compares two scales. Under dilation, the
relative ball in~\eqref{eq:relative-inball} grows linearly, while the
covering radius of the lattice in its supporting hyperplane remains
fixed. Once the ball is sufficiently large, it must contain a lattice
point.

Indeed, let $n\in\Z$ satisfy $n\beta>0$ and put
$t
:=
\frac{n}{\beta}$.
Then $t>0$ and
\begin{equation}
tH_\beta
=
H_n.
\label{eq:scaled-hyperplane}
\end{equation}
Scaling~\eqref{eq:relative-inball} by $t$ gives
\begin{equation}
B_{H_n}
\left(
t\mathbf c,
t\rho
\right)
\subseteq
\relint\left(
t(F+\mathbf y)
\right).
\label{eq:scaled-relative-inball}
\end{equation}
Because $\mathbf a$ is primitive, there exists
$\mathbf z\in\Z^d$ such that $\langle\mathbf a,\mathbf z\rangle=1$.
Therefore $H_n\cap\Z^d
=
n\mathbf z+\Lambda$, where
$\Lambda
:=
\left\{
\mathbf m\in\Z^d:
\langle\mathbf a,\mathbf m\rangle=0
\right\}$.
The lattice $\Lambda$ has rank $d-1$ in $H_0$. Let $R$ be its
covering radius. Since \(H_n\cap\Z^d\) is a translate of $\Lambda$,
the same covering radius $R$ works in every $H_n$.

Choose an integer $N\geq 1$ sufficiently large that
\begin{equation}
\frac{N\rho}{|\beta|}
>
R.
\label{eq:choice-of-N}
\end{equation}
Whenever $|n|\geq N$, we have
\begin{equation}
t\rho
=
\frac{|n|\rho}{|\beta|}
\geq
\frac{N\rho}{|\beta|}
>
R.
\label{eq:radius-exceeds-covering-radius}
\end{equation}
Hence the relative ball in~\eqref{eq:scaled-relative-inball} contains
a point of $H_n\cap\Z^d$. Consequently,
\begin{equation}
\relint\left(
t(F+\mathbf y)
\right)
\cap\Z^d
\neq\varnothing.
\label{eq:relative-interior-lattice-contact}
\end{equation}
Since $t=n/\beta$, this proves~\eqref{eq:facet-lattice-point}.
Lemma~\ref{lem:one-facet-jump} now shows that this lattice contact
produces a nonzero jump at $t$.

Finally, there are infinitely many integers $n$ having the same sign
as $\beta$ and satisfying $|n|\geq N$. Thus $F+\mathbf y$ produces
infinitely many nonzero jumps.
\end{proof}

\section{Proof of Theorem \ref{thm:main}, and an explicit vector}
\label{sec:exact-proof}

We first prove the obstruction needed for the converse direction of
Theorem~\ref{thm:main}.

\begin{proposition}[The affine-lattice obstruction]
\label{prop:affine-lattice-obstruction}
Let $d\geq 1$ and let $\mathbf y\in\R^d$. Suppose that
$1,y_1,\ldots,y_d$ are linearly dependent over $\Q$. Then there exist distinct
polytopes $P,Q\in\cP_d(\Q)$ such that
\begin{equation}
L_{P+\mathbf y}(t)=L_{Q+\mathbf y}(t)
\qquad\text{for every }t>0.
\label{eq:obstruction-count-equality}
\end{equation}
In particular, $\mathbf y\notin\cU_d^{\mathrm{Real}}$. Consequently, for every
$d\geq 1$,
\begin{equation}
\cU_d^{\mathrm{Real}}
\subseteq
\cA_d.
\label{eq:U-real-subset-A}
\end{equation}
\end{proposition}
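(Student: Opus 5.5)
The plan is to realize the mechanism sketched in the Remark of the introduction. I will produce a unimodular matrix $U\in GL_d(\Z)$ with $U\neq I$ and $\mathbf w:=(U-I)\mathbf y\in\Q^d$. I will then set $Q:=UP+\mathbf w$ for a suitably chosen $P$, which gives $Q+\mathbf y=U(P+\mathbf y)$. Since $U\Z^d=\Z^d$, for every $t>0$ we get
\[
L_{Q+\mathbf y}(t)=\bigl|\Z^d\cap tU(P+\mathbf y)\bigr|=\bigl|U^{-1}\Z^d\cap t(P+\mathbf y)\bigr|=L_{P+\mathbf y}(t).
\]
Moreover $Q$ is a full-dimensional rational polytope, because $U$ is an invertible integer matrix and $\mathbf w$ is rational. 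So everything reduces to constructing $U$ and then choosing $P$ so that $Q\neq P$. The inclusion \eqref{eq:U-real-subset-A} is then immediate by contraposition.

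\emph{Construction of $U$.} A nontrivial rational relation $q_0+q_1y_1+\cdots+q_dy_d=0$ must have $\mathbf q:=(q_1,\ldots,q_d)\neq\mathbf 0$, since otherwise $q_0=0$ as well. After clearing denominators we may take $\mathbf q\in\Z^d\setminus\{\mathbf 0\}$ and $q_0\in\Z$. I split into two cases.

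\emph{Case $d\geq 2$.} Pick a nonzero $\mathbf v\in\Z^d$ with $\langle\mathbf q,\mathbf v\rangle=0$; this is possible since $\mathbf q^\perp$ is a rational subspace of dimension $d-1\geq1$. Take the transvection $U:=I+\mathbf v\mathbf q^{T}$. It has integer entries, and it is unipotent because $(\mathbf v\mathbf q^T)^2=\langle\mathbf q,\mathbf v\rangle\,\mathbf v\mathbf q^T=0$, so $\det U=1$. Also $U\neq I$, and
\[
(U-I)\mathbf y=\mathbf v\langle\mathbf q,\mathbf y\rangle=-q_0\mathbf v\in\Q^d.
\]

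\emph{Case $\mathbf y\in\Q^d$ (which includes all of $d=1$).} Take $U:=-I$; then $(U-I)\mathbf y=-2\mathbf y$ is rational.

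\emph{Choice of $P$.} This is the only step needing care: the affine map $\phi(\mathbf x)=U\mathbf x+\mathbf w$ might happen to preserve $P$. I take $P$ to be an axis-parallel rational cube $C$. If $\phi(C)=C$, then the linear part $U$ must permute the edge directions of the cube up to sign, so $U$ would be a signed permutation matrix.

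For the transvection this is impossible. A signed permutation matrix has finite order, while a unipotent matrix of finite order is the identity, and $U\neq I$. So for $d\geq 2$ the choice $P=[0,1]^d$ works.

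For $U=-I$, the map $\phi(\mathbf x)=-\mathbf x-2\mathbf y$ is a point reflection through $-\mathbf y$. It preserves a centrally symmetric cube exactly when the cube's center equals $-\mathbf y$. Hence I choose a rational cube whose center differs from $-\mathbf y$. For example, take $[0,1]^d$ unless $\mathbf y=-(\tfrac12,\ldots,\tfrac12)$, and $[0,2]^d$ in that case. This gives distinct $P,Q\in\cP_d(\Q)$ satisfying \eqref{eq:obstruction-count-equality}.
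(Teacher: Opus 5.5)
Your proof is correct and follows the paper's argument: the same transvection $U=I+\mathbf v\mathbf q^{T}$ for $d\ge 2$, the same point reflection $Q=-P-2\mathbf y$ when $\mathbf y$ is rational (the paper's $d=1$ case), and the same unimodular invariance $Q+\mathbf y=U(P+\mathbf y)$. The only variation is how you certify $Q\neq P$. You argue that affine symmetries of a cube have signed-permutation linear parts, which have finite order, whereas a nontrivial unipotent has infinite order; the paper instead centers a cube at a rational point not fixed by $T$ and compares barycenters.
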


\begin{proof}
After clearing denominators in a nontrivial rational relation, we may assume that
$\langle\mathbf a,\mathbf y\rangle=b$, with
$\mathbf a\in\Z^d\setminus\{\mathbf 0\}$ and
$b\in\Z$.

\medskip
\noindent
{\bf Case $d=1$}. \ For dimension $d=1$, it follows that
$y\in\Q$.
We choose any rational interval $P$ satisfying
$P\neq -P-2y$,
and we define
\begin{equation}
Q:=-P-2y.
\label{eq:one-dimensional-Q}
\end{equation}
Then $Q$ is rational, $Q\neq P$, and
\begin{equation}
Q+y=-(P+y).
\label{eq:one-dimensional-reflection}
\end{equation}
The map $m\mapsto -m$ is a bijection of $\Z$, so~\eqref{eq:obstruction-count-equality}
follows.

\medskip
\noindent
{\bf Case $d\geq 2$}. \ Choose a nonzero vector $\mathbf v\in\Z^d$ satisfying
\begin{equation}
\langle\mathbf a,\mathbf v\rangle=0.
\label{eq:v-orthogonal-a}
\end{equation}
Such a vector exists because the kernel of the homomorphism
$\Z^d\to\Z$, $\mathbf v\mapsto\langle\mathbf a,\mathbf v\rangle$, has rank at least
$d-1$. Define
\begin{equation}
U:=I+\mathbf v\mathbf a^{T}.
\label{eq:unipotent-U}
\end{equation}
Since $\mathbf a^T\mathbf v=0$, the matrix determinant lemma and a direct multiplication
give
\begin{equation}
\det U=1+\mathbf a^T\mathbf v=1,
\qquad
U^{-1}=I-\mathbf v\mathbf a^T.
\label{eq:U-unimodular}
\end{equation}
Thus $U\in\operatorname{SL}_d(\Z)$, and $U\neq I$. Moreover,
\begin{equation}
(U-I)\mathbf y
=
\mathbf v\langle\mathbf a,\mathbf y\rangle
=
b\mathbf v
\in\Z^d.
\label{eq:U-minus-I-y}
\end{equation}

We now consider the nonidentity rational affine map
\begin{equation}
T(\mathbf x):=U\mathbf x+b\mathbf v.
\label{eq:affine-map-T}
\end{equation}
Choose a rational point $\mathbf p$ not fixed by $T$, and then choose a sufficiently small
full-dimensional rational cube $P$ centered at $\mathbf p$. Its image
\begin{equation}
Q:=T(P)=UP+b\mathbf v
\label{eq:Q-affine-image}
\end{equation}
is a full-dimensional rational polytope. The barycenters of $P$ and $Q$ are
$\mathbf p$ and $T(\mathbf p)$, respectively, so $P\neq Q$. By
\eqref{eq:U-minus-I-y},
\begin{equation}
Q+\mathbf y
=
UP+b\mathbf v+\mathbf y
=
UP+U\mathbf y
=
U(P+\mathbf y).
\label{eq:Q-y-U-P-y}
\end{equation}
Therefore
\begin{align}
L_{Q+\mathbf y}(t)
&=
\bigl|\Z^d\cap tU(P+\mathbf y)\bigr|
\label{eq:unimodular-count-1}
\\
&=
\bigl|\Z^d\cap U\bigl(t(P+\mathbf y)\bigr)\bigr|
\label{eq:unimodular-count-2}
\\
&=
\bigl|\Z^d\cap t(P+\mathbf y)\bigr|
=
L_{P+\mathbf y}(t),
\label{eq:unimodular-count-3}
\end{align}
where the last equality uses the fact that $U$ is a bijection of $\Z^d$.
\end{proof}

\begin{proof}[Proof of Theorem~\ref{thm:main}]
We first prove the containment:
\begin{equation}
\cA_d\subseteq\cU_d^{\mathrm{Rational}}.
\label{eq:A-subset-U-rat}
\end{equation}
Fix $\mathbf y\in\cA_d$, and let $P,Q\in\cP_d(\Q)$ satisfy
\eqref{eq:rational-sampling-main}. We express the given irredundant primitive facet
descriptions:
\begin{equation}
\label{eq:P-facet-presentation}
P
=
\bigcap_{i=1}^{N}
\left\{
\mathbf x:\langle\mathbf a_i,\mathbf x\rangle\leq b_i
\right\},
\quad
Q
=
\bigcap_{j=1}^{M}
\left\{
\mathbf x:\langle\mathbf c_j,\mathbf x\rangle\leq e_j
\right\},
\end{equation}
where $\mathbf a_i,\mathbf c_j\in\Z^d$ are primitive and
$b_i,e_j\in\Q$. Set
\begin{equation}
\beta_i:=b_i+\langle\mathbf a_i,\mathbf y\rangle,
\qquad
\delta_j:=e_j+\langle\mathbf c_j,\mathbf y\rangle.
\label{eq:beta-i-delta-j}
\end{equation}

Fix one facet of $P$, indexed by $i$. By Lemma~\ref{lem:facet-visible}, choose an integer
$n$ of the same sign as $\beta_i$, and sufficiently large in absolute value, such that
\begin{equation}
t_0:=\frac{n}{\beta_i}>0
\label{eq:chosen-jump-time}
\end{equation}
is a jump time for $L_{P+\mathbf y}$. Lemma~\ref{lem:rational-samples-detect-jumps}
shows that $t_0$ is also a jump time for $L_{Q+\mathbf y}$, with the same signed jump.
By Lemma~\ref{lem:one-facet-jump}, the lattice points on the boundary of
$t_0(Q+\mathbf y)$ lie in the relative interior of one unique facet, say the $j$-th
facet. Hence, for some nonzero integer $m$,
\begin{equation}
t_0\delta_j=m.
\label{eq:Q-contact-integer}
\end{equation}
Together with $t_0\beta_i=n$, this gives
\begin{equation}
\frac{\beta_i}{\delta_j}
=
\frac{n}{m}
\in\Q.
\label{eq:matching-rational-ratio}
\end{equation}
The equality of the signed jumps and~\eqref{eq:jump-formula} imply that
$\beta_i$ and $\delta_j$ have the same sign. Lemma~\ref{lem:rational-ratio} therefore
gives
\begin{equation}
\mathbf a_i=\mathbf c_j,
\qquad
b_i=e_j.
\label{eq:matching-facet-data}
\end{equation}
Thus every facet inequality of $P$ occurs among the facet inequalities of $Q$.
Interchanging $P$ and $Q$ gives the reverse inclusion. Since the two presentations are
irredundant, we have $P=Q$.
This proves~\eqref{eq:A-subset-U-rat}.

To finish the proof, we first note that we trivially have
$\cU_d^{\mathrm{Rational}}
\subseteq
\cU_d^{\mathrm{Real}}$.
Combining this with
\eqref{eq:A-subset-U-rat}, 
we obtain
\begin{equation}
\cA_d\subseteq\cU_d^{\mathrm{Rational}}
\subseteq
\cU_d^{\mathrm{Real}}
\subseteq
\cA_d,
\label{eq:universal-translate-containment-chain}
\end{equation}
where the last containment follows from
Proposition~\ref{prop:affine-lattice-obstruction}.
\end{proof}

\section
{Proofs of Corollary \ref{cor:explicit-vector} and
Corollary~\ref{cor:size-universal-set}
}
\begin{proof}[Proof of Corollary~\ref{cor:explicit-vector}]
Let $f_d(X):=X^{d+1}-2\in\Z[X]$.
We apply Eisenstein's criterion at the prime $2$. Its three hypotheses are especially
transparent here.  Namely, the leading coefficient of $f_d$ is $1$, 
every non-leading coefficient is divisible by $2$, and the constant coefficient $-2$ is not divisible by $2^2$.
Therefore Eisenstein's criterion implies that
$f_d(X)=X^{d+1}-2$ is irreducible over $\Q$.
The positive real number $\alpha_d=2^{1/(d+1)}$ is a root of $f_d$, so its minimal
polynomial over $\Q$ is $f_d$. In particular,
\begin{equation}
[\Q(\alpha_d):\Q]=d+1.
\label{eq:degree-alpha}
\end{equation}
If $1,\alpha_d,\ldots,\alpha_d^d$ were linearly dependent over $\Q$, there would be a
nonzero polynomial
\begin{equation}
g(X)=q_0+q_1X+\cdots+q_dX^d\in\Q[X]
\label{eq:lower-degree-polynomial}
\end{equation}
of degree at most $d$ satisfying $g(\alpha_d)=0$. The minimal polynomial $f_d$ would
then divide $g$, which is impossible because
\begin{equation}
\deg f_d=d+1>\deg g.
\label{eq:degree-contradiction}
\end{equation}
Hence
$1,\alpha_d,\alpha_d^2,\ldots,\alpha_d^d$
are linearly independent over $\Q$, which is equivalent to the statement
$\mathbf y^*\in\cA_d$. The conclusion now follows from Theorem~\ref{thm:main}.
\end{proof}

\bigskip
\begin{proof}[Proof of Corollary~\ref{cor:size-universal-set}]
Equation~\eqref{eq:A-d-Gdelta} is simply the definition of rational linear independence
after clearing denominators. For every integer vector $(q_0,\ldots,q_d)$ with
$(q_1,\ldots,q_d)\neq\mathbf 0$, the set
\begin{equation}
\left\{
\mathbf y\in\R^d:
q_0+q_1y_1+\cdots+q_dy_d=0
\right\}
\label{eq:rational-affine-hyperplane}
\end{equation}
is a proper closed affine hyperplane. Its complement is open and dense, and the
intersection in~\eqref{eq:A-d-Gdelta} is countable. The Baire category theorem tells us
that $\cA_d$ is a dense $G_\delta$-set. Each hyperplane in
\eqref{eq:rational-affine-hyperplane} has Lebesgue measure zero, so their countable union
also has measure zero.
\end{proof}

\begin{remark}
The proof gives more than an existence theorem. For $\mathbf y\in\cA_d$, each facet
of $P$ produces an infinite sequence of isolated jump times
\begin{equation}
\left\{
\frac{n}{b_i+\langle\mathbf a_i,\mathbf y\rangle}:
 n\in\Z,
\ n\bigl(b_i+\langle\mathbf a_i,\mathbf y\rangle\bigr)>0,
\ |n|\gg 1
\right\}.
\label{eq:facet-jump-sequence}
\end{equation}
Different facets produce incommensurable sequences. The proof of
Theorem~\ref{thm:main} uses this separation only qualitatively, but it suggests a more
effective reconstruction procedure based on clustering the discontinuities into their
facet sequences.
\hfill $\hexago$
\end{remark}

\begin{remark}[Relation with Royer's reconstruction method]
\label{rem:Royer-method}
For $\mathbf y\in\cA_d$, the possible contact times of the $i$-th facet
are contained in the additive subgroup
\begin{equation}
\Gamma_i
:=
\frac{1}{\beta_i}\Z,
\qquad
\beta_i=b_i+\langle\mathbf a_i,\mathbf y\rangle.
\label{eq:facet-contact-group}
\end{equation}
If $i\neq j$, then
$\Gamma_i\cap\Gamma_j=\{0\}$.
Indeed, a nonzero element of the intersection would give integers
$m,n\neq 0$ such that
$\frac{n}{\beta_i}
=
\frac{m}{\beta_j}$,
and hence
\begin{equation}
\frac{\beta_i}{\beta_j}
=
\frac{n}{m}
\in\Q,
\label{eq:common-time-rational-ratio}
\end{equation}
contrary to~\eqref{eq:pairwise-irrational-ratios}. Thus the positive
contact-time sequences of two distinct facets are tails of
incommensurable arithmetic progressions; equivalently, their spacings
$1/|\beta_i|$ and $1/|\beta_j|$ have irrational ratio.

The discontinuity viewpoint used here is closely related to the
reconstruction method of Royer~\cite{Royer}. Royer's Lemma~8 identifies
left discontinuities of the real-parameter Ehrhart function with lattice
points entering through the front facets, and right discontinuities with
lattice points leaving through the back facets. Under the separation
condition~\eqref{eq:pairwise-irrational-ratios}, this becomes the
one-facet jump formula of Lemma~\ref{lem:one-facet-jump}.

Royer's Lemma~9 proves the stronger asymptotic statement that, along the
dilation parameters for which the affine span of a semi-rational facet
meets the lattice, its lattice-point count, divided by the appropriate
power of the dilation parameter, converges to its relative volume,
uniformly in the integer translation. The present proof requires only
the weaker consequence that every sufficiently large eligible dilate of
a genuine facet contains a lattice point. In
Lemma~\ref{lem:facet-visible}, we prove this consequence directly by
using the covering radius of the lattice in the supporting hyperplane.
\hfill $\hexago$
\end{remark}

\section{Further remarks, and open problems}

The proof of Theorem~\ref{thm:main} is qualitative but contains the beginnings of an
explicit reconstruction method. Each facet gives a sequence of jump times with an
asymptotic spacing determined by the translated offset
$b_i+\langle\mathbf a_i,\mathbf y\rangle$, and Lemma~\ref{lem:rational-ratio} shows that
no two distinct rational facets can generate commensurable sequences.

\begin{problem}[Effective facet reconstruction]
Consider the explicit vector $\mathbf y^*$ in~\eqref{eq:explicit-y}. Given suitable
bounds on the denominators, coordinates, and number of facets of an unknown rational
polytope $P$, recover the primitive facet inequalities of $P$ from finitely many values of
\begin{equation}
L_{P+\mathbf y^*}(t),
\qquad
t\in\Q_{>0},
\label{eq:finite-reconstruction-data}
\end{equation}
and obtain an explicit upper bound for the required dilation parameters.
\end{problem}

The covering-radius proof of Lemma~\ref{lem:facet-visible} gives a natural geometric
starting point. A quantitative theorem would require effective control of the relative
inradii of the facets and the covering radii of their integral hyperplane lattices, followed
by a procedure that groups the observed jumps into the incommensurable families
in~\eqref{eq:facet-jump-sequence}.

\begin{problem}[Stability]
Develop a stable version of Theorem~\ref{thm:main}. Suppose that, on a
long finite interval, the discontinuities of two translated counting
functions can be matched so that the corresponding jump times and jump
sizes differ by prescribed small amounts. Under suitable a priori
bounds on the rational polytopes, determine whether this forces
quantitative closeness of their facet normals and facet offsets.
\end{problem}

\bigskip
\noindent
{\bf AI Usage}

\medskip
\noindent
The author used ChatGPT 5.6 Sol for editorial assistance, proofreading, suggestions, and some discovery, during the writing of this paper.  The author independently provided and re-checked all
mathematical arguments, statements, and references, and takes full responsibility for the final manuscript.


\end{document}